\theoremstyle{plain}
\newtheorem{theorem}{Theorem}
\newtheorem{lemma}{Lemma}
\theoremstyle{definition}
\newtheorem{remark}[lemma]{Remark}
\renewcommand{\theequation}{\theperpage\alphalph{\value{equation}}}
\newcommand{\myref}[1]{\ref{#1}\relax}
\newcommand{\myeqref}[1]{(\ref{#1})\relax}
\newcommand{\mylabel}[1]{\label{#1}\relax}
\newcommand{\myeqlab}[1]{\tag{\theequation}\stepcounter{equation}\label{#1}}
\newcommand{\cartp}{\times}
\newcommand{\R}{\mathds{R}}
\newcommand{\T}{\mathds{T}}
\newcommand{\ndiv}{\nabla\dotp}
\newcommand{\dotp}{\cdot}
\newcommand{\vv}{\vec v}
\newcommand{\pt}{\partial_t}
\newcommand{\subeq}[2]{\mathord{\underbrace{\mathop{#1}}_{#2}}}
\newcommand{\topref}[2]{\overset{\text{\myeqref{#1}}}{#2}}
\newcommand{\toprefb}[3]{\overset{\text{\myeqref{#1}}}{\underset{\text{\myeqref{#2}}}{#3}}}
\newcommand{\defm}[1]{\emph{#1}}
\newcommand{\Uz}{\overline U}
\newcommand{\eps}{\epsilon}
\newcommand{\dnconv}{\searrow}
\newcommand{\conv}{\rightarrow}
\newcommand{\BV}{\operatorname{BV}}
\newcommand{\selset}[2]{\{#1~:~#2\}}
\newcommand{\pd}[1]{\partial_{#1}}
\newcommand{\supp}{\operatorname{supp}}
\newcommand{\Dfun}{\mathcal{D}} 
\newcommand{\Ddist}{\Dfun'} 
\newcommand{\Leb}{\mathcal L}
\newcommand{\Leba}[1]{\Leb^{#1}}
\newcommand{\Linf}{\Leba\infty}
\newcommand{\embed}{\hookrightarrow}
\newcommand{\Lap}{\Delta}
\newcommand{\curl}{\operatorname{curl}}
\newcommand{\csep}{\quad,\quad}
\newcommand{\impl}{\Rightarrow}
\newcommand{\const}{\text{const}}
\newcommand{\visc}{\eps}
\renewcommand{\vv}{\mathbf v}
\newcommand{\vx}{x}
\newcommand{\ent}{\eta}
\newcommand{\fen}{\psi}
\newcommand{\dens}{\varrho}
\newcommand{\piv}{\pi}
\newcommand{\Dt}{D_t}
\newcommand{\cset}{F}
\newcommand{\coneinf}{K}
\begin{document}

\title{Relative entropy and compressible potential flow}
\author{Volker Elling\footnote{This material is based upon work partially supported by the
National Science Foundation under Grant No.\ NSF DMS-1054115 and by a Sloan Foundation Research Fellowship}}
\date{} 
\maketitle


\begin{abstract}
  Ccompressible (full) potential flow is expressed as an equivalent first-order system of 
  conservation laws for density $\dens$ and velocity $\vv$. 
  Energy $E$ is shown to be the only nontrivial entropy for that system
  in multiple space dimensions, and it is strictly convex in $\rho,\vv$ if and only if $|\vv|<c$. 
  For motivation some simple variations on the relative entropy theme of Dafermos/DiPerna are given,
  for example that smooth regions of weak entropy solutions shrink at finite speed,
  and that smooth solutions force solutions of singular entropy-compatible perturbations to converge to them. 
  We conjecture that entropy weak solutions of compressible potential flow are unique, 
  in contrast to the known counterexamples for the Euler equations. 
\end{abstract}

\section{Full Euler}

\setcounter{equation}{1}

Consider the Euler equations for compressible inviscid fluids in $n$ space dimensions:
\begin{alignat*}{5}  0 &= \dens_t &&+ \nabla\cdot(\dens\vv) && \quad,\quad && \text{[mass]} \myeqlab{eq:mass} \\
    0 &= (\dens\vv)_t &&+ \nabla\cdot(\dens\vv\otimes\vv) &&+ \nabla p\quad,\quad && \text{[momentum]}  \myeqlab{eq:mom} \\
    0 &= E_t &&+ \nabla\cdot(E\vv) &&+ \ndiv(p\vv)\quad. \qquad &&\text{[energy]} \myeqlab{eq:energy} \end{alignat*}
$\dens$ is density, $\vv$ velocity, $E=\dens e$ energy density, 
\begin{alignat*}{5} e=q+\frac12|\vv|^2 \myeqlab{eq:edef} \end{alignat*} 
energy per mass, 
$q$ heat per mass. 
With $U=(\dens,\dens\vv,E)$ we obtain the short form
\begin{alignat*}{5}  0 &= U_t + \nabla\cdot\vec f(U)   \myeqlab{eq:claw}  \end{alignat*}
Consider smooth $U$ that are vacuum-free ($\dens>0$). 
For any field $f$ the transport equation
\begin{alignat*}{5} g &= \Dt f \quad\text{with $\Dt=\pt+\vv\dotp\nabla$} \end{alignat*} 
is equivalent to the conservation law
\begin{alignat*}{5} \dens g &= (\dens f)_t + \ndiv(f\dens\vv) \end{alignat*} 
so \myeqref{eq:mom} is equivalent to
\begin{alignat*}{5} 0 &= \Dt\vv + \dens^{-1}\nabla p \myeqlab{eq:vv} \end{alignat*} 
and \myeqref{eq:energy} is equivalent to
\begin{alignat*}{5} 0 &= \Dt e + \dens^{-1}\ndiv(p\vv)
=
\Dt q + \vv\dotp(\subeq{\Dt\vv + \dens^{-1}\nabla p}{\topref{eq:vv}{=}0}) + \dens^{-1}p\ndiv\vv
\myeqlab{eq:q} \end{alignat*}

\section{Entropy and isentropic Euler}

We seek $s=s(\dens,q)$ that satisfy the pure transport equation
\begin{alignat*}{5} 0 &= \Dt s
= s_\dens\Dt\dens + s_q\Dt q
\toprefb{eq:mass}{eq:q}{=} -(s_\dens\dens+s_q\dens^{-1}p)\ndiv\vv \end{alignat*} 
which is satisfied if $s=s(\dens,q)$ is a solution of 
\begin{alignat*}{5} 0 &= \dens s_\dens + \dens^{-1}ps_q \end{alignat*} 
This is solvable by the method of characteristics, although here and elsewhere some assumptions on $p$ are needed; the most common \defm{polytropic} pressure law
\begin{alignat*}{5}
    p(\dens,q) &= (\gamma-1)\dens q  \myeqlab{eq:p-full}
\end{alignat*}
with \defm{isentropic coefficient} $\gamma\in(1,\infty)$ yields a solution
\begin{alignat*}{5}
    s &= \log\frac{p}{\dens^\gamma} = \log q +(1-\gamma)\log\dens   \myeqlab{eq:s}
\end{alignat*}
While generalizations to other $p$ are certainly possible, we focus on the polytropic case.

$\Dt s=0$ means $s$ is constant on each particle path (integral curve of $(1,\vv)$ in space-time $(t,\vx)$).
If we assume $s$ is constant \emph{everywhere} at some time $t$, for example $t=0$, then it is constant for all $t$.
In this case we may solve for $p$ as a function of $s,\dens$.
The energy equation is redundant now, so we obtain the
\defm{isentropic Euler equations} (sometimes referred to as \defm{adiabatic}):
\begin{alignat*}{5}
    0 &= \dens_t + \nabla\cdot(\dens\vv) \myeqlab{eq:mass2}\\
    0 &= \vv_t + \vv\cdot\nabla\vv + \nabla\piv \myeqlab{eq:mom2}
\end{alignat*}
where $\piv=\piv(\dens)$ solves 
\begin{alignat*}{5} \piv_\dens(\dens) = \dens^{-1} p_\dens(\dens) \quad. \end{alignat*}
Linearization around a constant background $U=(\dens,0)$ yields the wave equation $\dens_{tt}-c^2\Delta\dens=0$ with
\defm{speed of sound}
\begin{alignat*}{5} c = \sqrt{p_\dens} \quad. \myeqlab{eq:c} \end{alignat*} 

\section{Vorticity and compressible potential flow}

Write \myeqref{eq:mom2} in coordinates with Einstein convention:
\begin{alignat*}{5}
0 &= v^k_t + v^jv^k_j + \pi_k  \myeqlab{eq:momtens}
\end{alignat*}
Assume smoothness and take the curl: for all $1\leq k<m\leq n$, with 
\begin{alignat*}{5}&
    \omega^{km}:=v^k_m-v^m_k , \myeqlab{eq:omega}
\end{alignat*}
we obtain
\begin{alignat*}{5}
    0 &= v^k_{tm} + v^jv^k_{jm} &&+ v^j_mv^k_j 
    \\&- (v^m_{tk} + v^jv^m_{jk} &&+ v^j_kv^m_j)
    \\&= \omega^{km}_t + v^j\omega^{km}_j &&+ v^j_m(v^j_k+\omega^{kj}) \\&&&- v^j_k(v^j_m+\omega^{mj})
    \\&= D_t\omega^{km} &&+ v^j_m \omega^{kj} - v^j_k \omega^{mj} \quad.
    \myeqlab{eq:vort-n}
\end{alignat*}
If $\omega=0$ at some time $t$, then \myeqref{eq:vort-n} guarantees 
\begin{alignat*}{5}&
    \omega = 0 \qquad\text{for all $t$.} \myeqlab{eq:vort0}
\end{alignat*}
Then
\begin{alignat*}{5}&
    \vv = \nabla\phi \myeqlab{eq:phi}
\end{alignat*}
where $\phi$ is the scalar \defm{velocity potential}. Moreover \myeqref{eq:momtens} becomes
\begin{alignat*}{5}
    0 &\topref{eq:vort0}= v^k_t + v^j(v^j_k + \subeq{\omega^{kj}}{=0}) + \pi_k 
    = v^k_t+(\frac12v^jv^j+\pi)_k = v^k_t + B_k
    \myeqlab{eq:vB}
\end{alignat*}
where 
\begin{alignat*}{5}&
B := \frac12|\vv|^2+\pi \quad.    \myeqlab{eq:Bern}
\end{alignat*}
Using \myeqref{eq:phi} we obtain 
\begin{alignat*}{5}& 0 = (\phi_t+\frac12|\nabla\phi|^2+\pi)_k, \end{alignat*}
hence
\begin{alignat*}{5}& C(t) = \phi_t+\frac12|\nabla\phi|^2+\pi, \end{alignat*}
which can be normalized to the \defm{Bernoulli relation}
\begin{alignat*}{5} \phi_t+\frac12|\nabla\phi|^2+\pi(\dens) &= 0  \myeqlab{eq:bernoulli} \end{alignat*}
by a transformation $\phi\leftarrow\phi+\int^t_0C(t)dt$ (which has no effect on $\vv=\nabla\phi$). We may solve for $\dens$:
\begin{alignat*}{5}&
    \dens = \dens(\phi_t,|\nabla\phi|) = \pi^{-1}\big( - \phi_t - \frac12|\nabla\phi|^2\big)  \myeqlab{eq:pirho}
\end{alignat*}
Now the momentum equations have been eliminated as well; all that remains is the continuity equation
\begin{alignat*}{5}&
\dens(\phi_t,|\nabla\phi|)_t + \nabla\cdot\big( \dens(\phi_t,|\nabla\phi|)\nabla\phi \big) = 0 \myeqlab{eq:potf}
\end{alignat*}
a scalar second-order quasilinear divergence-form PDE called \defm{compressible potential flow}
(also: ``\defm{full} potential flow'', in distinction from approximations like the transonic small disturbance equation).

\section{Admissibility}

Even for smooth initial data, solutions of the Euler equations and other hyperbolic systems of the form
\begin{alignat*}{5} 0 &= U_t + A^i(U) U_i \myeqlab{eq:Aform}\end{alignat*} 
usually form discontinuities in finite time 
\cite{lax-singularity-formation,john-singularity-formation,tpliu-singularity-formation,sideris}.
In some circumstances, particularly when $A^i=f^i_U$ arise from the conservation form
\begin{alignat*}{5} 0 &= U_t + f^i(U)_i \quad, \myeqlab{eq:claww}\end{alignat*} 
we can continue past the time of singularity formation by considering \defm{weak solutions}. 
For initial data $U_0$ they are defined as satisfying
\begin{alignat*}{5} 0 &= \int \chi(0,x)U_0(x)dx+ \iint \chi_t(t,x)U(t,x)+\chi_i(t,x)f^i(U(t,x))dx~dt \myeqlab{eq:claww-weakform} \end{alignat*} 
for any smooth \defm{test function} $\chi$ defined on $[0,\infty)\cartp\R^n$ with compact support. 
(Analogously, we define weak solutions of potential flow \myeqref{eq:potf} as Lipschitz-continuous $\phi$ that satisfy 
\begin{alignat*}{5} 0 &= \int \chi(0,x)\dens_0(x)dx + \iint \chi_t(t,x)\dens(t,x)+\chi_i(t,x)\dens(t,x)v^i(t,x)dx~dt
\myeqlab{eq:potf-weakform} \end{alignat*} 
for any smooth test function $\chi$, with $\dens$ as in \myeqref{eq:pirho}, as well as 
\begin{alignat*}{5} \phi(0,x) = \phi_0(x) \end{alignat*} 
where $\dens_0,\phi_0$ is prescribed initial data.)

The most basic discontinuities are located on a smooth hypersurface $S$, with $U$ smooth on each side. 
In that case \myeqref{eq:claww-weakform} is equivalent to \myeqref{eq:claww} on each side of $S$ combined with the 
\defm{Rankine-Hugoniot condition}
\begin{alignat*}{5}&
    [(U,f^1(U),...,f^n(U))\dotp(-\sigma,\nu^1,...,\nu^n)] = 0
\end{alignat*}
in each point $\vx\in S$, where $\nu$ is a unit normal to $S$ in $\vx$, $\sigma$ is the speed of $S$ in $\vx$,
and $[g]:=g_+-g_-$ where $g_\pm$ are the $\pm$-side limits, with $\nu$ pointing to the $+$ side.

It is well-known that the weak formulation does not determine solutions uniquely
and that some solutions may contain unphysical features such as \defm{expansion shocks}; 
for Burgers equation 
\begin{alignat*}{5} 0 &= u_t + (\frac12 u^2)_x \end{alignat*} 
constant initial data $u=0$ allows not only the strong solution $u=0$ but also the weak solution
\begin{alignat*}{5} u(t,x) = \begin{cases} 
  0 , & x/t < -\sigma \\ 
  -2\sigma , & -\sigma < x/t < 0 \\ 
  2\sigma , & 0 < x/t < \sigma \\ 
  0 , & \sigma < x/t 
\end{cases} \quad.\end{alignat*} 
Analogous solutions for full Euler are easily constructed by solving 1d Riemann problems. 
To achieve uniqueness it is necessary to impose additional \defm{admissibility criteria}.
\newcommand{\Ui}{U^\visc}
Most derive from the \defm{vanishing (uniform) viscosity criterion}: by analogy with passing to the Euler equations from Navier-Stokes,
we seek solutions $U$ of \myeqref{eq:claww} that are limits of solutions $\Ui$ of 
\begin{alignat*}{5} 0 &= \Ui_t + f^i(\Ui)_i - \visc \Ui_{ii} \myeqlab{eq:vanunifvisc}\end{alignat*} 
as $\visc\dnconv 0$. In case of bounded almost everywhere convergence, for example, the limit $U$ satisfies \myeqref{eq:claww} 
in the weak sense. 

Verifying the vanishing viscosity criterion directly is cumbersome. It is preferable to derive and utilize a simpler
condition, ideally one that yields uniqueness. For particular solutions, such as isolated 1d shocks, 
there are many convenient conditions such as the Lax criterion \cite{lax-claws-ii} or the Liu criterion
\cite{liu-condition-uniqueness-twobytwo,liu-admissibility-memoir}. 
But for large classes of solutions --- such as $L^\infty$ --- the most useful necessary condition is the
\defm{entropy condition} \cite{friedrichs-lax-entropy}: we call a tuple $(\ent,\vec q)$ of smooth functions $\ent,\fen^1,...,\fen^n$
an \defm{entropy-flux pair} if
\begin{alignat*}{5}&
  \fen^i_U(U) = \ent_U(U) A^i(U) \qquad (i=1,...,n)   \myeqlab{eq:eflux}
\end{alignat*}
so that in the case of \emph{smooth} $U$ we obtain
\begin{alignat*}{5}&
    0 \topref{eq:Aform}{=} \ent_U(U)U_t + \ent_U(U) A^i(U)U_i 
    \topref{eq:eflux}{=} \ent(U)_t + \fen^i(U)_i \quad. \myeqlab{eq:smoothe}
\end{alignat*}
For viscous perturbations we have 
\begin{alignat*}{5} 0 
&\topref{eq:vanunifvisc}{=} 
\ent(\Ui)_t + \fen^i(\Ui)_i - \visc \ent_U(\Ui)\Ui_{ii} 
= 
\ent(\Ui)_t + \fen^i(\Ui)_i - \visc \ent(\Ui)_{ii} + \visc \ent_{UU}\Ui_i\Ui_i
\end{alignat*} 
If $\ent$ is convex, then $\ent_{UU}$ is positive semidefinite, so 
\begin{alignat*}{5} 0 &\geq \ent(\Ui)_t + \fen^i(\Ui)_i - \visc \ent(\Ui)_{ii} \end{alignat*} 
Assume $\Ui\conv U$ boundedly a.e.\ as $\visc\conv 0$. In the limit, exploiting that all derivatives are on the outside, we obtain \myeqref{eq:claww} and
\begin{alignat*}{5} 0 &\geq \ent(U)_t + \fen^i(U)_i \myeqlab{ineq:entropy}\end{alignat*} 
in the weak sense. (We have equality for affine $\eta$ (\defm{trivial} entropies).)
A weak solution $U$ of \myeqref{eq:claww} is called \defm{entropy solution} if it satisfies
\myeqref{ineq:entropy}
for \emph{every} entropy-flux pair with \emph{convex} $\ent$. 
For many purposes a single \emph{strictly} convex entropy is sufficient. (There is no guarantee that \emph{any} nontrivial entropies exist.)

For the full Euler equations \myeqref{eq:mass},\myeqref{eq:mom},\myeqref{eq:energy}, 
$\ent:=-\dens s$ with flux $\fen=-\ent s\vv$ is a strictly convex entropy (on the set of $U=(\dens,\dens\vv,E)$ so that $\dens,q>0$);
it is (up to scaling) the negative of the (density of) gas-dynamic entropy in the second law of thermodynamics.

It is well-known (and we show later) that for the isentropic Euler equations \myeqref{eq:mass2},\myeqref{eq:mom2} the \emph{energy}
$\ent:=E$ with entropy fluxes $\fen^i=(E+p)v^i$ takes the role of strictly convex ``entropy''.
While smooth solutions of isentropic Euler are also solutions of the full Euler equations, 
this is no longer true for weak entropy solutions in the presence of shock waves
(for example shocks for isentropic Euler satisfy a strict $E$ inequality
while full Euler shocks conserve $E$).
Likewise, smooth solutions of potential flow are isentropic and therefore full Euler solutions, 
but solutions containing shocks are not (for example multi-d curved Euler shocks generally produce vorticity, 
but potential flow shocks cannot). 
Nevertheless, for solutions with \emph{weak} discontinuities the simplified models yield reasonable approximations;
for example for given upstream data and shock speed, the resulting downstream data is asymptotic 
(see Figure \myref{fig:shockrel} for a comparison of stationary shocks for the three models). 
\begin{figure}
  \hfil\input{shockrel.pstex}\hfil
  \input{shockasy.pstex}\hfil
  \caption{Downstream Mach number $M_+=|\vv_+|/c_+$ as a function of upstream Mach number $M_-=|\vv_-|/c_-$, 
    for $p(\dens)=\dens^\gamma$ with $\gamma=1.4$. Left: the weak shocks are asymptotic but not identical. 
    Right: unlike the approximate models, full Euler has shocks with positive $M_+$ limit.}
  \mylabel{fig:shockrel}
\end{figure}
However, for large variations the discrepancies become significant
and the solutions can at best be considered qualitatively similar.

The second law of thermodynamics motivates one particular entropy inequality directly, without the vanishing viscosity limit;
in fact this motivation applies for perturbations other than uniform or Navier-Stokes viscosity as well. 
Most other ``entropies'' are mathematical devices that may or may not have a particular physical meaning; 
some entropy inequalities --- such as the energy \emph{in}equality for isentropic flow --- are in fact clearly somewhat undesirable 
from a physical point of view and justifiable only as approximations. 

In a \emph{single} space dimension $n=1$, uniqueness --- in various senses --- of entropy solutions
is known for $\BV$ or closely related classes \cite{bressan-crasta-piccoli,bressan-lefloch,liu-yang-2x2,liu-yang}. 
However, in two or more space dimensions 
it appears entropy solutions of the isentropic or full Euler equations need not be unique 
\cite{elling-nuq-journal,de-lellis-szekelyhidi}. 
The author proposes that this is due to the inclusion of vorticity (rotational flow) in an inviscid flow model.
Uniqueness does not and should not hold since inviscid models neglect vorticity on small viscous scales that may propagate
to have a large-scale effect \cite{elling-carbuncle-glimm,elling-hyp2010}. In particular, non-uniqueness is grounded in physics
and therefore searching for more restrictive but reasonable admissibility criteria for Euler flow is futile.
Only models with sufficient viscosity or other smoothing terms to establish a ``bottom'' to the hierarchy of scales
would have a chance for uniqueness. Alternatively, inviscid models could be valid/reliable in cases where the effects of vorticity 
are negligible.
Compressible potential flow is at the top of the hierarchy of zero-vorticity models.

\section{Weak-strong uniqueness and finite speed of propagation}

Beyond general notions of admissibility, which are then the foundation of a large number of results on 1d systems of conservation laws,
such as existence of admissible solutions for some data \cite{glimm,tpliu-deterministic-glimm}, 
strictly convex entropies are also useful for other purposes, 
for example proving \defm{weak-strong uniqueness} using the \defm{relative entropy} idea 
of Dafermos \cite{dafermos-uq} and DiPerna \cite{diperna-uq}. 
To illustrate the idea we prove some simple variations on the theme. 
Here and elsewhere we sacrifice generality for clarity of exposition. 
\begin{theorem}
  \mylabel{th:finitespeed}%
  Assume \myeqref{eq:claww} has a strictly convex entropy-flux pair $(\ent,\fen)$. 
  Let $\Uz$ be a \emph{smooth} solution of \myeqref{eq:claww} with values in a compact convex set $P$.
  Then there is a constant $M<\infty$ with the following property:
  if $U$ is a weak entropy solution of \myeqref{eq:claww} for initial data $U_0$, both with values in $P$, 
  and if 
  \begin{alignat*}{5} U_0 = \Uz(0,\cdot) \quad\text{a.e.\ outside $\cset$} \end{alignat*} 
  where $\cset\subset\R^n$ is closed, then 
  \begin{alignat*}{5} U=\Uz \end{alignat*} 
  a.e.\ outside the cone of influence 
  \begin{alignat*}{5} \coneinf = \selset{(t,x)}{t\geq 0,~d(x,\cset)\leq Mt} \end{alignat*}
\end{theorem}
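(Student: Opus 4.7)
The plan is to apply the relative entropy technique of Dafermos and DiPerna, localized to a truncated backward cone so as to encode finite propagation speed.

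First, introduce the \emph{relative entropy} and \emph{relative entropy flux}
\[
  \ent(U|\Uz) := \ent(U) - \ent(\Uz) - \ent_U(\Uz)(U - \Uz),
\]
\[
  \fen^i(U|\Uz) := \fen^i(U) - \fen^i(\Uz) - \ent_U(\Uz)\bigl(f^i(U) - f^i(\Uz)\bigr).
\]
Since $\ent$ is strictly convex on the compact convex set $P$, there exist constants $0<\alpha\leq\beta<\infty$ with $\alpha|U-\Uz|^2 \leq \ent(U|\Uz) \leq \beta|U-\Uz|^2$ for $U,\Uz\in P$. Expanding $\fen^i(U|\Uz)$ around $U=\Uz$ and invoking the compatibility \myeqref{eq:eflux}, the linear terms cancel, so $|\fen^i(U|\Uz)| \leq C|U-\Uz|^2$ on $P$. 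Consequently the quantity
\[
  M := \sup_{U,\Uz \in P,\ |\nu|=1} \frac{|\nu_i \fen^i(U|\Uz)|}{\ent(U|\Uz)}
\]
is finite, and this is my choice of propagation speed.

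Next I would derive the distributional inequality
\[
  \pt \ent(U|\Uz) + \partial_i \fen^i(U|\Uz) \leq C_1\, \ent(U|\Uz),
\]
with $C_1$ depending only on $\|\nabla_{t,x}\Uz\|_\infty$ and $P$. Formally I subtract the smooth entropy identity for $\Uz$ from the weak entropy inequality \myeqref{ineq:entropy} for $U$, then expand the cross terms $\pt[\ent_U(\Uz)(U-\Uz)]$ and $\partial_i[\ent_U(\Uz)(f^i(U)-f^i(\Uz))]$. The $U_t$ and $\partial_i f^i(U)$ contributions cancel via \myeqref{eq:claww}, and the remaining pieces combine, using $\pt\Uz = -f^i_U(\Uz)\partial_i\Uz$, into an expression of the form
\[
  \ent_{UU}(\Uz)\,\partial_i\Uz\,\bigl[f^i(U)-f^i(\Uz)-f^i_U(\Uz)(U-\Uz)\bigr],
\]
which is $O(|U-\Uz|^2) = O(\ent(U|\Uz))$ uniformly on $P$.

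With these two ingredients the cone argument proceeds as follows. Fix $(t_0,x_0)$ with $r := d(x_0,\cset) - Mt_0 > 0$ and set
\[
  \Omega := \{(s,x) : 0\leq s\leq t_0,\ |x-x_0|\leq M(t_0-s) + r\}.
\]
Integrate the distributional inequality against a smooth non-negative cutoff approximating the indicator of $\Omega$, and pass to the limit. By Gauss's theorem: the top face contributes $\int_{|x-x_0|\leq r}\ent(U|\Uz)(t_0,x)\,dx$; the bottom face contributes $0$ because $U_0 = \Uz(0,\cdot)$ almost everywhere on $B_{Mt_0+r}(x_0)\subset\R^n\setminus\cset$; on the lateral surface the outward normal is $(\nu_t,\nu_i)$ with $\nu_t/|\nu_i|=M$, so the flux integrand $\nu_t\ent(U|\Uz) + \nu_i\fen^i(U|\Uz) \geq 0$ by the defining property of $M$ and is discarded. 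Setting $G(s) := \int_{|x-x_0|\leq M(t_0-s)+r}\ent(U|\Uz)(s,x)\,dx$, this yields $G(t_0) \leq C_1\int_0^{t_0}G(s)\,ds$ with $G(0)=0$; Gr\"onwall then forces $G\equiv 0$, so $U=\Uz$ almost everywhere in $\Omega$, and in particular outside $\coneinf$.

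The main obstacle is the rigorous derivation of the distributional inequality for merely $L^\infty$ solutions $U$: the $U_t$-cancellation must be realized by using $\ent_U(\Uz)\chi$ (with $\chi$ a test function) as a test function against the weak form of \myeqref{eq:claww}, which is why the smoothness of $\Uz$ is essential. Once that algebraic identity is justified in $\Ddist$, the sharp choice of $M$ from the quadratic bounds on $\ent(\cdot|\cdot)$ and $\fen^i(\cdot|\cdot)$, together with the Gauss--Gr\"onwall step on the truncated cone, are essentially mechanical.
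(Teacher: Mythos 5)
Your proposal is correct and follows essentially the same route as the paper: the same relative entropy $E(U,\Uz)$ and relative flux $\Psi^i$, the same quadratic bounds from strict convexity on the compact $P$, and the same truncated-backward-cone integration with the lateral flux controlled by taking $M$ large. The only cosmetic difference is that the paper absorbs the Gr\"onwall step into an exponential weight $\vartheta(t)=e^{-ct}$ multiplying a Lipschitz cone cutoff, whereas you run an explicit Gr\"onwall inequality on $G(s)$; these are interchangeable.
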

\begin{remark}
  This theorem unifies the classical weak-strong uniqueness results of Dafermos and DiPerna
  ($\cset=\R^n$) with the classical finite-speed-of-propagation result for symmetric hyperbolic systems
  $$ U_t + A^iU_i = 0 $$
  with constant $A^i$
  (take $f^i(U)=A^iU$, $\ent(U)=\frac12|U|^2$, $\fen^i(U)=\frac12U^TA^iU$). 
\end{remark}
\begin{remark}
  Generally $M$ cannot depend on $\Uz$ alone (physically that would be like ambient air bounding
  the expansion speed of an explosion regardless of its strength, which is obviously unreasonable; 
  a rigorous counterexample can be obtained by solving Riemann problems); it must be permitted to depend on $P$ (hence $U$) as well. 
  
  The theorem cannot be generalized to $\Uz$ that are merely \emph{weak} entropy solutions, 
  since the notion of signal speed makes no sense without uniqueness 
  which is known to be false in higher dimensions \cite{de-lellis-szekelyhidi,elling-nuq-journal}. 
  Uniqueness is also lost if we do not require $U$ to be a weak \emph{entropy} solution. 
\end{remark}

\begin{proof}[Proof of Theorem \myref{th:finitespeed}]
  The relative entropy method seeks an estimate for the quadratic functional
  \begin{alignat*}{5} E(U,\Uz) = \ent(U) - \ent(\Uz) - \ent_U(\Uz)(U-\Uz) \end{alignat*} 
  by taking a distributional $\pt$ (note that $\pt,\pd i$ become classical on expressions not containing $U$):
  \begin{alignat*}{5} E_t 
  &= \ent(U)_t - \ent(\Uz)_t + \subeq{\ent_U(\Uz)\Uz_t}{=\ent(\Uz)_t} - \ent_U(\Uz)U_t - \ent_{UU}(\Uz)\Uz_t(U-\Uz)
  \\&\toprefb{ineq:entropy}{eq:claww}{\leq} -\fen^i(U)_i + \ent_U(\Uz)f^i(U)_i + \ent_{UU}(\Uz)f^i_U(\Uz)\Uz_i(U-\Uz)
  \intertext{(use $\ent_{UU}f^i_U$ symmetric)}
  &= -\fen^i(U)_i + \ent_U(\Uz)f^i(U)_i + \ent_{UU}(\Uz)\Uz_if^i_U(\Uz)(U-\Uz)
  \\&= -\big( \fen^i(U) - \ent_U(\Uz)f^i(U) \big)_i - \ent_U(\Uz)_i \big( f^i(U) - f^i_U(\Uz)(U-\Uz) \big)
  \end{alignat*} 
  To form quadratic functionals we add 
  \begin{alignat*}{5} 0 &= 
    \fen^i(\Uz)_i - \ent_U(\Uz)f^i(\Uz)_i
  \\&=
  \big( \fen^i(\Uz) - \ent_U(\Uz)f^i(\Uz) \big)_i
  + \ent_U(\Uz)_i f^i(\Uz)
  \end{alignat*} 
  so that
  \begin{alignat*}{5} E_t 
  &\leq
  - \Big( \subeq{ \fen^i(U) - \fen^i(\Uz) - \ent_U(\Uz) \big( f^i(U) - f^i(\Uz) \big) }{=\Psi^i} \Big)_i 
  - \subeq{ \ent_U(\Uz)_i \big( f^i(U) - f^i(\Uz) - f^i_U(\Uz)(U-\Uz) \big) }{=R}
  \myeqlab{eq:relent}
  \end{alignat*} 
  By strict convexity of $\ent$ and compactness of $P$ we obtain 
  \begin{alignat*}{5} E \geq \tau|U-\Uz|^2 \end{alignat*} 
  for some constant $\tau=\tau(P)>0$; 
  on the other hand $E,\Psi,R$ are quadratic, so for some $C=C(P)<\infty$
  \begin{alignat*}{5} E,|\Psi|,|R|\leq C|U-\Uz|^2 \leq C\tau^{-1}E \quad. \end{alignat*} 

  \begin{picture}(0,0)%
\includegraphics{conedep.pstex}%
\end{picture}%
\setlength{\unitlength}{3947sp}%
\begingroup\makeatletter\ifx\SetFigFont\undefined%
\gdef\SetFigFont#1#2#3#4#5{%
  \reset@font\fontsize{#1}{#2pt}%
  \fontfamily{#3}\fontseries{#4}\fontshape{#5}%
  \selectfont}%
\fi\endgroup%
\begin{picture}(4156,1554)(1171,-3328)
\put(2551,-2311){\makebox(0,0)[rb]{\smash{{\SetFigFont{6}{7.2}{\rmdefault}{\mddefault}{\updefault}{\color[rgb]{0,0,0}$(x_0,t_0+M^{-1}\delta)$}%
}}}}
\put(1576,-2686){\makebox(0,0)[b]{\smash{{\SetFigFont{6}{7.2}{\rmdefault}{\mddefault}{\updefault}{\color[rgb]{0,0,0}$(\nu^x,\nu^t)$}%
}}}}
\put(3676,-3286){\makebox(0,0)[b]{\smash{{\SetFigFont{6}{7.2}{\rmdefault}{\mddefault}{\updefault}{\color[rgb]{0,0,0}$\delta$}%
}}}}
\put(2551,-2686){\makebox(0,0)[rb]{\smash{{\SetFigFont{6}{7.2}{\rmdefault}{\mddefault}{\updefault}{\color[rgb]{0,0,0}$(x_0,t_0)$}%
}}}}
\put(1201,-1861){\makebox(0,0)[lb]{\smash{{\SetFigFont{6}{7.2}{\rmdefault}{\mddefault}{\updefault}{\color[rgb]{0,0,0}$t$}%
}}}}
\put(4801,-3211){\makebox(0,0)[lb]{\smash{{\SetFigFont{6}{7.2}{\rmdefault}{\mddefault}{\updefault}{\color[rgb]{0,0,0}$x$}%
}}}}
\put(4051,-2686){\makebox(0,0)[lb]{\smash{{\SetFigFont{6}{7.2}{\rmdefault}{\mddefault}{\updefault}{\color[rgb]{0,0,0}$K$}%
}}}}
\put(3376,-2761){\makebox(0,0)[lb]{\smash{{\SetFigFont{6}{7.2}{\rmdefault}{\mddefault}{\updefault}{\color[rgb]{0,0,0}$M$}%
}}}}
\put(3826,-2911){\makebox(0,0)[rb]{\smash{{\SetFigFont{6}{7.2}{\rmdefault}{\mddefault}{\updefault}{\color[rgb]{0,0,0}$1$}%
}}}}
\put(3976,-3286){\makebox(0,0)[lb]{\smash{{\SetFigFont{6}{7.2}{\rmdefault}{\mddefault}{\updefault}{\color[rgb]{0,0,0}$\cset$}%
}}}}
\end{picture}%

  Now let $(x_0,t_0)\notin \coneinf$, so that $\delta=d(x_0,\cset)-Mt_0>0$. 
  Formally we integrate \myeqref{eq:relent} over the cone of dependence of $(x_0,t_0)$, 
  controlling the boundary terms 
  \begin{alignat*}{5} \iint E \nu^t + (\Psi^1,...,\Psi^n)\dotp(\nu^1,...,\nu^n)dS(x)dt \end{alignat*} 
  by taking $M$ large so that $\nu^t\gg|\nu^1|,...,|\nu^n|$,
  while applying a Gronwall argument to the interior. 
  More precisely, define a ``mollified cone of dependence'':
  \begin{alignat*}{5} 
    \chi(x,t) &= \begin{cases}
      1, & |x-x_0| \leq M(t_0-t), \\
      1-(|x-x_0|-M(t_0-t))\delta^{-1} , & M(t_0-t) \leq |x-x_0| \leq M(t_0-t)+\delta \\
      0, & M(t_0-t)+\delta \leq |x-x_0|
    \end{cases}
    \end{alignat*} 
    and
    $\vartheta(t) = e^{-ct}$. 
    The weak formulation extends from smooth to Lipschitz test functions like $\chi\vartheta\geq 0$, so
    \begin{alignat*}{5}
      0 &\leq
      \int_\R \vartheta(0)\chi(x,0) \subeq{ E(U_0(x),\Uz(0,x)) }{=0} dx 
      \\&+ \int_0^\infty \int_{M(t_0-t)\leq|x-x_0|\leq M(t_0-t)+\delta}
      \subeq{\vartheta}{>0}\big(
      \subeq{\chi_t}{\leq -M\delta^{-1}} 
      \subeq{ E(U,\Uz) }{\geq 0} 
      +
      \subeq{\nabla\chi}{|\cdot|\leq\delta^{-1}} \dotp \subeq{ \Psi(U,\Uz) }{|\cdot|\leq C\tau^{-1} E(U,\Uz)}
      \big)
      dx~dt
      \\&+ \int_0^\infty \int_{|x-x_0|\leq M(t_0-t)+\delta}
      \chi \big( \subeq{\vartheta_t}{=-c\vartheta}
      \subeq{ E(U,\Uz) }{\geq 0} 
      \subeq{ -R(U,\Uz) }{\leq C\tau^{-1}E(U,\Uz)}
      \vartheta 
      \big)
      dx~dt
      \myeqlab{eq:weakform}
    \end{alignat*}
    The first integral is zero because $\chi(0,x)\neq 0$ means $|x-x_0|< Mt_0+\delta=d(x_0,\cset)$, so $x\notin \cset$, 
    hence $\Uz(0,x)=U_0(x)$. 
    Upon picking $M>C\tau^{-1}$ and $c\geq C\tau^{-1}$ we obtain that the second and third integral are $\leq 0$, 
    and in particular considering the $\chi_t$ term we have $E(U,\Uz)=0$ a.e.\ in $\supp\chi_t$ (light shaded area in the picture). 
    These regions, upon varying $(x_0,t_0)$, cover the complement of $\coneinf$, where we obtain that $U=\Uz$ a.e.
\end{proof}

\section{Convergence of higher-order perturbations to classical solutions}

\begin{theorem}
  \mylabel{th:vanviscconv}%
  Consider as domain a torus $\T^n$. 
  Assume \myeqref{eq:claww} has a strictly convex entropy-flux pair $(\eta,\fen)$. 
  Let $\Uz$ be a \emph{smooth} solution of \myeqref{eq:claww} with values in a convex compact $P\subset\R^m$.
  For some sequence of $\visc\conv 0$ and for \emph{same} initial data 
  let there be weak (measurable) solutions 
  $\Ui$ with values in $P$ of 
  \begin{alignat*}{5} 0 =& \Ui_t + f^i(\Ui)_i - \visc L\Ui  
  \end{alignat*} 
  where $L$ is a continuous operator on $\Ddist$, and assume
  \begin{alignat*}{5} 0 \geq& \ent(\Ui)_t + \fen^i(\Ui)_i \quad. \myeqlab{eq:entropy-compatible} \end{alignat*} 
  Then 
  \begin{alignat*}{5} \Ui\conv\Uz \quad\text{a.e.\ as $\visc\conv 0$} \end{alignat*} 
\end{theorem}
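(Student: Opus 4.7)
The plan is to mimic the relative entropy computation of Theorem \myref{th:finitespeed} on the torus, where divergence terms integrate to zero with no boundary contribution. First I would compute $E(\Ui,\Uz)_t$ distributionally exactly as in the proof of Theorem \myref{th:finitespeed}, except substituting $\Ui_t = -f^i(\Ui)_i + \visc L\Ui$ in place of $U_t = -f^i(U)_i$. The $\visc L\Ui$ piece survives the manipulation untouched, yielding
\begin{alignat*}{5}
E(\Ui,\Uz)_t \leq -\Psi^i(\Ui,\Uz)_i - R(\Ui,\Uz) - \visc \ent_U(\Uz) L\Ui
\end{alignat*}
in the sense of distributions, with the same $\Psi$ and $R$ as before and the same bounds $E \geq \tau|\Ui-\Uz|^2$ and $|R|\leq C\tau^{-1}E$ arising from strict convexity of $\ent$ and compactness of $P$.

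Next I would integrate over $\T^n$ against a nonnegative time cutoff. The divergence $\Psi^i_i$ contributes nothing by periodicity, so writing $F(t):=\int_{\T^n} E(\Ui(t,\cdot),\Uz(t,\cdot))\,dx$ one obtains distributionally
\begin{alignat*}{5}
\frac{d}{dt} F(t) \leq C\tau^{-1} F(t) \;-\; \visc\,\bigl\langle L\Ui(t,\cdot),\;\ent_U(\Uz(t,\cdot))\bigr\rangle.
\end{alignat*}
Because $L$ is continuous on $\Ddist$, its transpose $L^*$ maps $\Dfun$ continuously into $\Dfun$, so $L^*\ent_U(\Uz(t,\cdot))$ is smooth on $\T^n$ and uniformly bounded on any $[0,T]\times\T^n$ (using smoothness of $\Uz$). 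Pairing with $\Ui(t,\cdot)$, which takes values in the compact set $P$, the extra term is bounded by $C''\visc$ uniformly in $t\in[0,T]$.

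Then I would apply Gronwall. Since $\Ui$ and $\Uz$ share initial data, $F(0)=0$, so
\begin{alignat*}{5}
F(t) \;\leq\; C''\visc\int_0^t e^{C\tau^{-1}(t-s)}\,ds \;\conv\; 0
\end{alignat*}
uniformly on $[0,T]$ as $\visc\conv 0$. Using $E\geq\tau|\Ui-\Uz|^2$, this yields $\Ui\conv\Uz$ in $L^2([0,T]\cartp\T^n)$. Extracting an a.e.-convergent further subsequence and applying the usual subsequence-of-every-subsequence argument converts this into a.e.\ convergence along the original $\visc$-sequence.

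The main obstacle is the rigorous interpretation of the distributional identity $E_t\leq\ldots$ in the presence of the $\visc L\Ui$ term: $L$ may be a genuine differential operator, so $L\Ui$ is only a distribution and the product $\ent_U(\Uz)L\Ui$ must be understood by transposing $L$ onto the smooth multiplier $\ent_U(\Uz)$. Once this duality step is justified (using that $L^*$ preserves $\Dfun$), the smallness of the extra term is visible and the remainder of the argument is a routine relative-entropy Gronwall estimate analogous to Theorem \myref{th:finitespeed}.
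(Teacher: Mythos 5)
Your proposal is correct and follows essentially the same route as the paper: the relative entropy inequality with the extra $-\visc\,\ent_U(\Uz)L\Ui$ term, which vanishes as $\visc\conv 0$ by $L^\infty$-boundedness of $\Ui$ and continuity of $L$ on $\Ddist$, followed by integration over $\T^n$ and Gronwall with $F(0)=0$. Your transposition of $L$ onto the smooth multiplier and the explicit subsequence argument for a.e.\ convergence merely fill in details the paper leaves implicit.
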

\begin{proof}
  We repeat the steps in the derivation of \myeqref{eq:relent}, with additional terms:
  using \myeqref{eq:entropy-compatible} we get
  \begin{alignat*}{5} E(\Ui,\Uz)_t + \Psi^i(\Ui,\Uz)_i + \subeq{R(\Ui,\Uz)}{\geq-C\tau^{-1}E} &\leq 
  - \visc~ \ent_U(\Uz) L\Ui
  \conv 0 \quad\text{in $\mathcal{D}'$ as $\visc\conv 0$}
  \end{alignat*} 
  because $(\Ui)$ is bounded in $\Linf\embed\Ddist$, 
  so $(\ent_U(\Uz)L\Ui)$ is also bounded because $L$ and (by smoothness of $\Uz$)
  multiplication by $\ent_U(\Uz)$ are continuous on $\Ddist$.
  After integrating over $\T^n$,
  a standard Gronwall argument using $E(\Ui,\Uz)=0$ at $t=0$ shows $\int E(\Ui,\Uz)dx\conv 0$, 
  hence $\Ui\conv\Uz$ a.e.
\end{proof}
\begin{remark}
  The most obvious application is the uniform viscosity perturbation $L=\Lap$, where \myeqref{eq:entropy-compatible} is implied by
  \begin{alignat*}{5} -\visc \ent_U(U)U_{ii} = \visc \big( \ent(U)_{ii} - \ent_{UU}(U)U_iU_i \big) 
  \overset{\ent_{UU}\geq0}{\leq} \visc \ent(U)_{ii} \conv 0 \quad\text{in $\mathcal{D}'$ as $\visc\dnconv 0$.} \end{alignat*} 
  Of course for this uniform viscosity perturbation there are other ways of obtaining $\Ui\conv\Uz$, 
  for example inverting $\pt-\visc\Lap$ to a heat kernel $G$ and estimating a right-hand side $f^i(\Ui)_i*G(\eps t)$. 
  But if boundedness is already known, the relative entropy approach is shorter and applies to any $L$.
  The entropy inequality \myeqref{eq:entropy-compatible} is satisfied for the physical entropy 
  if the perturbation $\visc L$ is sufficiently realistic. 
\end{remark}
\begin{remark}
  When initial data only matches outside a set $\cset$, 
  convergence outside a cone of dependence $\coneinf$ can be shown by combining the proofs of Theorem \myref{th:finitespeed}
  and Theorem \myref{th:vanviscconv}.
  This requires some care:
  $\chi$ must be modified to be smooth,
  and a single constant $M$ suffices only if $\cset$ is compact or further assumptions on $L$ are made.
\end{remark}

\section{Potential flow as a first-order system}

Compressible potential flow \myeqref{eq:potf} is a second-order PDE. 
Although many of the results on theory and numerics of 1st order systems may be adaptable, 
which of them are is not immediately obvious,  
and the sheer amount makes the job rather tedious. 
We provide a shortcut by writing potential flow as an equivalent first-order system. 

Observing that for \emph{steady} potential flow the density $\dens$ determines the magnitude $|\vv|$ of the velocity
via the steady Bernoulli relation
\begin{alignat*}{5} 0 &\topref{eq:Bern}{=} \frac12|\vv|^2 + \piv(\dens) \quad, \end{alignat*} 
some past work \cite{osher-hafez-whitlow,morawetz-1995,chen-dafermos-slemrod-wang} 
has studied a $2\times 2$ system for $\dens$ and the angle $\theta$ of $\vv$. 
\cite{osher-hafez-whitlow} use these variables as the basis for an entropy and propose an admissibility criterion. 
$\dens,\theta$ are unattractive for our goal: generalization to $n\geq 3$ seems awkward (although possible),
$\vv=0$ becomes a singularity, but most importantly the idea does not generalize to unsteady flow. 

Another line of work \cite{chipman-jameson-1979} 
employs the gradient \myeqref{eq:vB} of the unsteady Bernoulli relation \myeqref{eq:bernoulli}:
\begin{alignat*}{5}
    0 &= \dens_t + \nabla\cdot(\dens\vv), \myeqlab{eq:rho3} \\
    0 &= \vv_t + \nabla\big(\frac12|\vv|^2+\pi(\dens)\big) \myeqlab{eq:rhovk}
\end{alignat*}
with conserved density $U=(\dens,\vv)$. 
The distributional curl of \myeqref{eq:rhovk} immediately implies 
\begin{alignat*}{5} 0 &= \pt \curl\vv \end{alignat*} 
If the initial $\vv$ is also curl-free, then $\curl\vv=0$ for all $t$, so $\vv=\nabla\phi$ in the distributional sense. 
In that case weak solutions of the system above correspond to weak solutions of compressible potential flow \myeqref{eq:potf}.
The opposite direction is true as well.

While the remaining discussion applies to $n\geq 3$ with obvious modifications, 
we focus on $n=2$ (with $(v,w)=(v^1,v^2)$, $(x,y)=(x^1,x^2)$) for readability.  
In contrast to the isentropic Euler system
\begin{alignat*}{5} 
0 &= \begin{bmatrix}
  \dens \\ v \\ w
\end{bmatrix}_t
+
\begin{bmatrix}
  v & \dens & 0 \\
  \piv_\dens & v & \mathbf 0 \\
  0 & 0 & \mathbf v
\end{bmatrix}
\begin{bmatrix}
  \dens \\ v \\ w
\end{bmatrix}_x
+
\begin{bmatrix}
  w & 0 & \dens \\
  0 & \mathbf w & 0 \\
  \piv_\dens & \mathbf 0 & w \\
\end{bmatrix}
\begin{bmatrix}
  \dens \\ v \\ w
\end{bmatrix}_y
\end{alignat*} 
the 1st order system form of compressible potential flow reads
\begin{alignat*}{5} 
0 &= \begin{bmatrix}
  \dens \\ v \\ w
\end{bmatrix}_t
+
\subeq{\begin{bmatrix}
  v & \dens & 0 \\
  \piv_\dens & v & \mathbf w \\
  0 & 0 & \mathbf 0
\end{bmatrix}}{=\begin{bmatrix}
  A^x_\dens & A^x_v & A^x_w
  \end{bmatrix}}
\begin{bmatrix}
  \dens \\ v \\ w
\end{bmatrix}_x
+
\subeq{\begin{bmatrix}
  w & 0 & \dens \\
  0 & \mathbf 0 & 0 \\
  \piv_\dens & \mathbf v & w \\
\end{bmatrix}}{=\begin{bmatrix}
  A^y_\dens & A^y_v & A^y_w
  \end{bmatrix}}
\begin{bmatrix}
  \dens \\ v \\ w
\end{bmatrix}_y
\myeqlab{eq:potfA}
\end{alignat*} 
This is not particularly attractive either. While the latter system is still rotation-invariant, 
it is no longer Galilean-invariant: seeking perturbations $\tilde U=\tilde U(t,x)$ by linearization
yields characteristics $x/t=\lambda^\alpha$ where the eigenvalues $\lambda^\alpha$ are
\begin{alignat*}{5} v-c , 0 , v+c \end{alignat*} 
as opposed to $v-c,v,v+c$ for isentropic. 
As a result the system is strictly hyperbolic only when $|\vv|\neq c$. 
It would be worthwhile to find a different 1st order system form of potential flow that does not suffer from these defects.

\section{Entropies for potential flow and isentropic Euler}

We determine all entropies for \myeqref{eq:potfA}, recovering the known results for 1d and for isentropic Euler along the way.

\subsection{1d case}

The entropy-flux condition $\fen^i_U=\eta_UA^i$ in components (with $A^i$ columns $A^i_\dens,A^i_v,A^i_w$):
\begin{alignat*}{5} 
\fen^x_\dens &= \ent_U A^x_\dens 
= v\ent_\dens + \piv_\dens\ent_v 
\\
\fen^x_v &= \ent_U A^x_v 
= \dens \ent_\dens + v \ent_v 
\end{alignat*} 
Compatibility relations:
\begin{alignat*}{5} 0 
&= (\fen^x_\dens)_v-(\fen^x_v)_\dens 
= (v\ent_\dens + \piv_\dens\ent_v)_v - (\dens \ent_\dens + v \ent_v)_\dens
= \ent_\dens + v\ent_{\dens v} + \piv_\dens\ent_{vv} - \ent_\dens - \dens \ent_{\dens\dens} - v \ent_{\dens v}
\\&= \piv_\dens\ent_{vv} - \dens \ent_{\dens\dens} 
\myeqlab{eq:rel1d}
\end{alignat*} 
In 1d this is the only condition on $\eta$ for existence of $\fen^x$, and it is the same for potential flow and for isentropic flow. 
We easily obtain a large family of entropies, for example by imposing $\ent$ and $\ent_\dens$ on $v=0$
and solving \myeqref{eq:rel1d} locally. A particular choice is 
\begin{alignat*}{5}&
    \ent(\dens,v) = \frac12 v^2 + g(\dens)  \myeqlab{eq:g}
\end{alignat*}
where $g$ is chosen to solve $g_{\dens\dens}(\dens)=\dens^{-1}\piv_\dens(\dens)=c^2\dens^{-2}>0$; clearly $g$ and hence $\ent$ 
are strictly convex.

\subsection{2d case}

For isentropic Euler
\begin{alignat*}{5} 
\fen^x_w &= \ent_UA^x_w = v \ent_w
&\csep
\fen^y_v &= \ent_UA^y_v = w \ent_v
\end{alignat*} 
The first yields the compatibility relation
\begin{alignat*}{5} 0 = (\fen^x_w)_\dens - (\fen^x_\dens)_w = (v\ent_w)_\dens - (v\ent_\dens+\piv_\dens\ent_v)_w = -\subeq{\piv_\dens}{>0} \ent_{v w} 
\quad\impl\quad \ent_{v w}=0 \end{alignat*} 
and it implies the analogous relation for $y$. For potential flow on the other hand we have
\begin{alignat*}{5} 
\fen^x_w &= \ent_UA^x_w = w \ent_v
&\csep
\fen^y_v &= \ent_UA^y_v = v \ent_w
\end{alignat*} 
These yield
\begin{alignat*}{5} 0 
&= (\fen^x_w)_\dens - (\fen^x_\dens)_w 
= (w\ent_v)_\dens - (v\ent_\dens+\piv_\dens\ent_v)_w 
= w\ent_{v\dens} - v\ent_{w\dens} - \piv_\dens\ent_{vw} \quad;
\end{alignat*} 
the analogous relation for $y$ is 
\begin{alignat*}{5} 
0 & = v\ent_{w\dens} - w\ent_{v\dens} - \piv_\dens\ent_{vw} 
\end{alignat*} 
and taking the sum and using $\piv_\dens>0$ we obtain 
\begin{alignat*}{5} 0 &= \ent_{vw} \quad, \end{alignat*} 
as for isentropic Euler. 

Integrating this over $v$ and then $w$ yields 
\begin{alignat*}{5} \ent(\dens,v,w) &= g^x(\dens,v) + g^y(\dens,w) \myeqlab{eq:ent-sum} \end{alignat*} 
Moreover, in 2d the relation \myeqref{eq:rel1d} has the $x\rightarrow y$ analogue 
\begin{alignat*}{5} 
0 &= 
\piv_\dens\ent_{ww}-\dens\ent_{\dens\dens}
\end{alignat*} 
and both combined yield
\begin{alignat*}{5} \ent_{vv} &= \ent_{ww} \end{alignat*} 
Applied to \myeqref{eq:ent-sum} this yields
\begin{alignat*}{5} g^x_{vv}(\dens,v) &=  g^y_{ww}(\dens,w) \quad. \end{alignat*} 
That shows both sides are constant in $v,w$:
\begin{alignat*}{5} g^x_{vv}(\dens,v) = g^E(\dens) = g^y_{ww}(\dens,w) \end{alignat*} 
so integrating twice we obtain
\begin{alignat*}{5} \ent(\dens,v,w) &= g^0(\dens) + g^v(\dens)v + g^w(\dens)w + g^E(\dens)\frac{v^2+w^2}{2} \myeqlab{eq:entb} \end{alignat*} 
for some functions $g^0,g^v,g^w$.

The last two compatibility relations: for isentropic flow,
\begin{alignat*}{5} 0 &= (\fen^x_v)_w-(\fen^x_w)_v = (\dens\ent_\dens+v\ent_v)_w-(v\ent_w)_v = \dens\ent_{\dens w} - \ent_w 
= \dens g^w_\dens(\dens)+ \dens g^E_\dens(\dens) w - g^w(\dens) - g^E(\dens) w  \quad, \end{alignat*} 
so by varying $w$ we see that 
\begin{alignat*}{5} \dens g^w_\dens(\dens) - g^w(\dens) &= 0 \quad\impl\quad g^w(\dens) = C^w\dens \quad, \\
\dens g^E_\dens(\dens) - g^E(\dens) &=  0 \quad\impl\quad g^E(\dens) = C^E\dens \quad. \end{alignat*} 
Using the analogous $y$ relation we obtain $g^v(\dens) = C^v\dens$. 

For potential flow, we instead have 
\begin{alignat*}{5} 0 &= (\fen^x_v)_w-(\fen^x_w)_v = (\dens\ent_\dens+v\ent_v)_w-(w\ent_v)_v 
\overset{\ent_{vw}=0}{=} \dens\ent_{\dens w} - w \ent_{vv} 
= \dens g^w_\dens(\dens) + \dens g^E_\dens(\dens) w - w g^E(\dens) \end{alignat*} 
Varying $w$ we obtain again that $g^E(\dens)=C^E\dens$, but $g^w(\dens)=C^w=\const$ now; 
the analogous $y$ relation yields $g^v(\dens)=C^v=\const$.

Either way $g^v_{\dens\dens}=g^w_{\dens\dens}=g^E_{\dens\dens}=0$, so the last remaining equation \myeqref{eq:rel1d} becomes
\begin{alignat*}{5} 0 &\toprefb{eq:rel1d}{eq:entb}{=} 
\piv_\dens C^E \dens
- \dens g^0_{\dens\dens}(\dens) \quad\impl\quad g^0(\dens) = C^0 + C^\dens\dens + C^E \int^\dens\piv(\dens)d\dens \end{alignat*} 

Altogether, for isentropic Euler we have
\begin{alignat*}{5} \ent &= C^0 + C^\dens\dens + C^v\dens v + C^w\dens w + C^E ( \subeq{ \int^\dens\piv(\dens)d\dens + \dens\frac{v^2+w^2}{2} }{=E} ) \end{alignat*} 
whereas for potential flow we obtain
\begin{alignat*}{5} \ent &= C^0 + C^\dens\dens + C^v v + C^w w + C^E E \end{alignat*} 
Either way, any entropy is a linear combination of $1$ and the conserved quantities ($\rho,\rho v,\rho w$ for isentropic Euler,
$\rho,v,w$ for potential flow), and the energy $E$, which is the only nontrivial part.

\subsection{Convexity}

For isentropic Euler the Hessian of 
\begin{alignat*}{5} E(\dens,\dens v,\dens w) = \int^\dens\piv(\dens)d\dens + \frac{(\dens v)^2+(\dens w)^2}{2\dens} \end{alignat*} 
is
\begin{alignat*}{5} 
\begin{bmatrix}
  \piv_\dens + \dens^{-1} (v^2+w^2) & - \dens^{-1} v & - \dens^{-1} w \\
  - \dens^{-1} v & \dens^{-1} & 0 \\
  - \dens^{-1} w & 0 & \dens^{-1}
\end{bmatrix} \end{alignat*} 
The matrix has determinant $ \dens^{-2} \piv_\dens > 0$, and so do the lower right
$1\times 1$ and $2\times 2$ blocks, so the Hessian is positive definite 
and $E$ is strictly convex. 

For potential flow the Hessian of
\begin{alignat*}{5} E(\dens,v,w) = \int^\dens\piv(\dens)d\dens + \dens \frac{v^2+w^2}{2} \end{alignat*} 
is
\begin{alignat*}{5} 
\begin{bmatrix}
  \piv_\dens & v & w \\
  v & \dens & 0 \\
  w & 0 & \dens
\end{bmatrix} \end{alignat*} 
The lower right $1\times 1$ and $2\times 2$ block have positive determinant, but the entire matrix 
has determinant $\dens(c^2-v^2-w^2)$.
This is positive, and $E$ is locally strictly convex, if and only if $\vv$ is subsonic: $|\vv|<c$. 

In the case of $n\geq 3$ space dimensions, the compatibility relations for higher-dimensional isentropic Euler 
resp.\ potential flow imply the 2d forms when setting $v^3=...=v^n=0$. 
Thus the set of entropies cannot be larger, and it is straightforward to check that $E$ remains an entropy
with same convexity properties. 

The restriction $|\vv|<c$ can be relaxed: the weak form of 2nd order compressible potential flow itself
is Galilean-invariant, so we may change to any frame moving at constant speed before passing to the 1d system \myeqref{eq:rho3},\myeqref{eq:rhovk}. 
Hence, for flows that have local velocity \emph{variation} less than twice the speed of sound $c$ (for non-isothermal flow the velocity dependence of $c$
must also be considered), 
we obtain enough entropies to define admissibility \emph{locally}. 

Nevertheless, the energy is not a suitable entropy in cases of local velocity variation larger than the speed of sound. 
As Figure \myref{fig:shockrel} right shows, such shocks are certainly mathematically possible (if physically inaccurate). 
They would certainly occur if, for example, at initial time the fluid adjacent to a solid wall has normal velocity $v^n>c$;
at $t>0$ a shock would emanate from the wall, separating a $v^n>c$ region from a $v^n=0$ region. 
Such flows have been studied in \cite{elling-liu-pmeyer} in the context of supersonic flow onto a solid wedge and in
\cite{elling-rrefl-lax} in the context of regular reflection. 

The 1d entropy derived in \myeqref{eq:g} does not suffer from this restriction, 
but as we have shown it is not an entropy in dimensions $n\geq 2$. 

\begin{theorem}
  For compressible potential flow in the form \myeqref{eq:rho3},\myeqref{eq:rhovk}, 
  energy $E$ is the only nontrivial entropy. It is strictly convex in $(\rho,\vv)$ for $|\vv|<c$. 
\end{theorem}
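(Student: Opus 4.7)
My plan is to collect the computations already developed in the preceding subsections and assemble them into a clean argument, then handle convexity by direct Hessian analysis.

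First, I would set up the entropy condition \myeqref{eq:eflux} componentwise, reading off the entries of the Jacobians $A^x,A^y$ from \myeqref{eq:potfA}. This gives six first-order PDEs for the unknown pair $(\ent,\fen^x,\fen^y)$ as functions of $(\dens,v,w)$. From the six compatibility conditions $(\fen^i_a)_b=(\fen^i_b)_a$ I would derive, in sequence: the cross-condition $\ent_{vw}=0$ (obtained by adding the $\dens$-mixed relations for $x$ and $y$ so that the $\piv_\dens \ent_{vw}$ terms double up and the vorticity-type $v\ent_{w\dens},w\ent_{v\dens}$ terms cancel); the two ``sound'' relations $\piv_\dens\ent_{vv}=\dens\ent_{\dens\dens}=\piv_\dens\ent_{ww}$; and finally the $(v,w)$-mixed relation $0=\dens\ent_{\dens w}-w\ent_{vv}$ and its analogue in $v$. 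Integrating $\ent_{vw}=0$ and using $\ent_{vv}=\ent_{ww}$ forces
\begin{alignat*}{5} \ent(\dens,v,w)=g^0(\dens)+g^v(\dens)v+g^w(\dens)w+g^E(\dens)\,\tfrac{v^2+w^2}{2} \end{alignat*}
exactly as in \myeqref{eq:entb}.

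Next, I substitute this ansatz into the remaining relations. The $(v,w)$-mixed relation, after varying $w$, forces $g^w(\dens)=\text{const}$ and $g^E(\dens)=C^E\dens$; symmetrically $g^v=\text{const}$. Finally, plugging into \myeqref{eq:rel1d} yields $\dens g^0_{\dens\dens}=C^E \piv_\dens\dens$, so $g^0(\dens)=C^0+C^\dens\dens+C^E\int^\dens\piv$. Collecting terms, every entropy is an affine combination
\begin{alignat*}{5} \ent = C^0+C^\dens\dens+C^v v+C^w w+C^E E. \end{alignat*}
Since $1,\dens,v,w$ are components of the conserved state (their entropy inequalities are the weak form of \myeqref{eq:rho3},\myeqref{eq:rhovk} themselves and hold with equality), they are trivial, leaving $E$ as the only nontrivial entropy. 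For $n\geq3$ I would note that setting $v^3=\ldots=v^n=0$ reduces the compatibility system to the $2$D one, so the admissible $\ent$ can be no larger, while $E$ continues to solve the full system by direct check.

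For convexity I would simply compute the Hessian of
\begin{alignat*}{5} E(\dens,v,w)=\int^\dens\piv(\dens)d\dens+\dens\tfrac{v^2+w^2}{2} \end{alignat*}
in the variables $(\dens,v,w)$, obtaining
\begin{alignat*}{5} \begin{bmatrix} \piv_\dens & v & w\\ v & \dens & 0\\ w & 0 & \dens\end{bmatrix}, \end{alignat*}
and apply Sylvester's criterion: the $1\times1$ and $2\times2$ leading minors are $\piv_\dens=c^2>0$ and $c^2\dens$, while the full determinant is $\dens(c^2-v^2-w^2)$, positive iff $|\vv|<c$. I expect the derivation of the compatibility relations to be the only delicate step, mainly because the potential-flow Jacobians differ from the isentropic-Euler ones precisely in the off-diagonal ``$w$-in-$A^x$, $v$-in-$A^y$'' entries, so care is needed to pick out the right symmetric combination that eliminates the $\ent_{\dens v},\ent_{\dens w}$ cross-terms and isolates $\ent_{vw}$; after that, everything is linear algebra and integration.
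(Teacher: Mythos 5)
Your proposal follows the paper's own argument essentially step for step: the same componentwise compatibility relations, the same summation of the two $\dens$-mixed relations to cancel the $v\ent_{w\dens},w\ent_{v\dens}$ terms and extract $\ent_{vw}=0$, the same ansatz \myeqref{eq:entb}, the same reduction of $g^v,g^w,g^E,g^0$, the same $n\geq3$ reduction, and the same Hessian computation. The only blemish is a small slip in the convexity step --- since $\piv_\dens=\dens^{-1}p_\dens=c^2/\dens$ (not $c^2$), your leading principal minors are $c^2/\dens$ and $c^2-v^2$ rather than the values you quote (the paper sidesteps this by checking the lower-right $1\times1$ and $2\times2$ blocks, which are just $\dens$ and $\dens^2$) --- but the full determinant $\dens(c^2-|\vv|^2)$ and the conclusion are unaffected.
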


\section{Conclusion}

The energy inequality, while not quite satisfactory, is currently the only simple admissibility criterion 
for unsteady compressible potential flow applicable to general function spaces. 

Vorticity appears to be the root of non-uniqueness in the Euler case,
but is absent in potential flow. We conjecture the following:\\
\centerline{\fbox{\parbox{\linewidth-2\fboxsep-2cm}{For given irrotational initial data $\dens_0,\vv_0\in L^\infty$ ($\dens_0>0$), 
      there is at most one admissible weak solution of compressible potential flow.}}}
Some qualifications are likely needed, for example restrictions on the pressure law.

\bibliographystyle{amsalpha}
\bibliography{../../../research/pmeyer/elling}

\end{document}